\newtheorem{theorem}{Theorem}
\newtheorem{lemma}{Lemma}
\newtheorem{definition}{Definition}
\newtheorem{example}{Example}
\newtheorem{example*}{Example*}
\newtheorem{proposition}{Proposition}
\newtheorem{remark}{Remark}
\newenvironment{proof}[1][Proof]{\textbf{#1.} }{\ \rule{0.5em}{0.5em}}
\begin{document}
% --- Author Metadata here ---
%\conferenceinfo{ISSAC}{'14 Kobe, Japan}
%\CopyrightYear{2007} % Allows default copyright year (20XX) to be over-ridden - IF NEED BE.
%\crdata{0-12345-67-8/90/01}  % Allows default copyright data (0-89791-88-6/97/05) to be over-ridden - IF NEED BE.
% --- End of Author Metadata ---
\title{On the Reduction of Singularly-Perturbed Linear Differential Systems\footnote{Submitted to ISSAC'14, Kobe, Japan}}
\author {\textbf{Moulay Barkatou, Suzy S. Maddah} \footnote{Enrolled  under a joint PhD program with the Lebanese University}\\
     XLIM UMR 7252 ; DMI\\
          University of Limoges; CNRS\\
       123, Avenue Albert Thomas\\
       87060 Limoges, France\\
       moulay.barkatou@unilim.fr\\
       suzy.maddah@etu.unilim.fr\\[10pt]
         \textbf{Hassan Abbas}\\
       Laboratory of Mathematics\\
       Lebanese University, Beirut, Lebanon\\
       habbas@ul.edu.lb\\}
\maketitle
\begin{abstract}
In this article, we recover singularly-perturbed linear differential systems from their \textit{turning points} and reduce their parameter singularity's rank to its minimal integer value. Our treatment is Moser-based; that is to say it is based on the reduction criterion introduced for linear singular differential systems in \citep{key19}. Such algorithms have proved their utility in the symbolic resolution of the systems of linear functional equations \citep{key80,key64,key26}, giving rise to the package ISOLDE \citep{key27}, as well as in the perturbed algebraic eigenvalue problem \citep{key54}. In particular, we generalize the Moser-based algorithm described in \citep{key41}. Our algorithm, implemented in the computer algebra system Maple, paves the way for efficient symbolic resolution of singularly-perturbed linear differential systems as well as further applications of Moser-based reduction over bivariate (differential) fields \citep{key101}.
\end{abstract}
\textbf{Keywords}: Moser-based reduction, Perturbed linear differential systems, Turning points, Computer algebra.

\section{Introduction}
Let $K$ be a commutative field of characteristic zero equipped with a derivation $\delta$, that is a map $\delta : K \rightarrow K$ satisfying
$$\delta(f + g) = \delta f + \delta g \; \text{and} \; \delta(f g) = \delta(f) g + f \delta(g) \; \text{for all} \; f, g \in K.$$
We denote by $\mathcal{C}$ its field of constants, $V$ a  $K$-vector space of dimension n, and $A$ an element in $K^{n \times n}$. Set $\Delta = \delta - A$, then $\Delta$ is a $\delta$- differential operator acting on $V$, that is, a $\mathcal{C}$-linear endomorphism of $V$ satisfying the Leibniz condition:
$$ \forall f \in K, \; v \in V   \;  \Delta (f v) = \delta (f)  v + f \Delta(v) .$$
Let $\mathcal{O}=\mathbb{C}[[x,\epsilon]]$, the ring of formal power series in $x$ and $\epsilon$ over the field of complex numbers, where $x$ is a complex variable and $\epsilon$ is a small parameter. Denote by $K$ its field of fractions equipped with $\delta = \epsilon \frac{d}{dx}$. Let $\mathcal{O}_x=\mathbb{C}[[x]]$, $K_x$ be its fields of fractions, and $h,p$ be integers ($h \geq 0$). Thereby, $\Delta$ is the differential operator associated to a singularly-perturbed linear differential system. Denoting by $Y$ an unknown $n$-dimensional column vector, the former is written as
\begin{equation}
\label{general}
\epsilon \frac{dY}{dx} = A(x, \epsilon) Y = \epsilon^{- h} x^{- p} \sum_{k=0}^{\infty} A_k(x) {\epsilon}^k  Y.
\end{equation} 
Such systems have countless applications which are traced back to the year 1817 and their study encompasses a vast body of literature (see, e.g. \citep{key61, key72, key82, key60} and references therein). However, their symbolic resolution is still open to investigation. 

Clearly, system \eqref{general} is a singular perturbation of the widely studied linear singular system of  differential equations (see, e.g., \citep{key6,key71} and references therein). The latter is obtained by setting $\delta= x \frac{d}{dx}$ and $K=\mathbb{C}((x))$, the univariate field of  formal Laurent power series. Hence $\Delta$ is associated to
\begin{equation}
 \label{particular}
x \frac{dY}{dx} = A(x)  Y = x^{-p} \sum_{k=0}^{\infty} A_k  x^k  Y.
 \end{equation}
Contrary to system \eqref{general}, there exist efficient algorithms contributing to the formal reduction of system \eqref{particular} (see, e.g \citep{key24, key33}), that is the algorithmic procedure that computes a change of basis w.r.t. which $A(x)$ has normal form facilitating the construction of formal solutions. Among which, rank reduction algorithms play an eminent role as they reduce the nonnegative integer $p$, called the \textit{Poincar\'e rank}, to its minimal integer value, the \textit{true Poincar\'e rank} (see, e.g., \citep{key26,key57}). In particular, if the latter is null then system \eqref{particular} is said to be \textit{regular singular}, that is to say, in any small sector, its solutions grow at most as an algebraic function. Otherwise, it is \textit{irregular singular}. 

However, the classical formal simplification of system \eqref{particular} begins with the reduction of its leading coefficient matrix $A_0$ to its Jordan form. Hence, in addition to the usual difficulties encountered within the formal reduction itself, additional ones arise for system \eqref{general} since its leading coefficient matrix $A_0(x)$  is a matrix-valued function rather than a constant one. In particular, if it has mulitple eigenvalues then they might coalesce (see, e.g., the example page 223, \citep{key60}). In classical textbooks, points where the Jordan form of $A_0(x)$ is unstable, that is to say either the multiplicity of the eigenvalues or the degrees of the elementary divisors are not constant in a neighborhood of such points, are referred to as \textit{turning points} (see, e.g., page 57, \citep{key59}). The behavior of solutions of differential systems around such points is far more complicated than that of system \eqref{particular} around an \textit{irregular singularity}. In fact, the neighborhood of such a point is decomposed into a finite number of $x$-dependent neighborhoods in each of which the solution behaves quite differently though it may still be asymptotically accomplished (see, e.g., \citep{key62}). In particular, if $h \leq -1$ then the solution of system \eqref{general} can be sought upon presenting the solution as a power series in $\epsilon$. The latter can then be inserted into \eqref{general} and the like powers of $\epsilon$ equated. This reduces the problem  to solving recursively a set of non-homogeneous linear singular differential systems over $K_x$, the first of which is system \eqref{particular} (see, e.g. page 52, \citep{key61} and page 101, \citep{key59}). Hence, it seems plausible to investigate wether $h$ rather than $p$ can be diminished for such systems. 

The methods proposed in the literature of system \eqref{general} either exclude \textit{turning point} \citep{key61} or are not algorithmic throughout. Moreover, they make an essential use of the so-called Arnold-Wasow form.  For the univariate case, system \eqref{particular}, the research advanced profoundly in the last two decades making use of methods of modern algebra and topology. The former classical approach is substituted by efficient algorithms (see, e.g., \citep{key24,key27,key26,key33} and references therein). It was the hope of Wasow \citep{key60}, in his 1985 treatise summing up contemporary research directions and results on system \eqref{general}, that techniques of system \eqref{particular} be generalized to tackle the problems of system \eqref{general}. 

In this article, we are interested in recovering system \eqref{general} from its \textit{turning points} and decoupling it into a set of systems of lower dimensions for each of which $h$ has a minimal integer value.

Given system \eqref{particular}, Moser defined two rational numbers:
\begin{eqnarray}\label{invariant1} m(A) & =& \; \text{max} \;  (0, p + \frac{rank(A_0)}{n}) \quad \text{and} \\  \label{invariant2} \mu(A) &=& \; \text{min} \; \{ \; m (T^{-1} \Delta T)\; | \; T \in GL(V)  \} .
\end{eqnarray}
It follows that system \eqref{particular} is regular whenever $\mu(A) \leq 1$. For $m(A) >1$, he proved that $m(A) > \mu(A)$ if and only if the polynomial 
$$\theta(\lambda) := {x}^{rank (A_0)} \; det(\lambda I + \frac{A_0}{x} + A_1 )|_{x=0}$$ 
vanishes identically in $\lambda$. In this case, system \eqref{particular} (resp. $A(x)$) is said to be Moser-reducible and $m(A)$ can be diminished by applying a coordinate transformation $Y= T Z$ where $T \in GL(V)$ of the form
$$T= (P_0 + P_1 x)\; diag (1, \dots, 1, x, \dots, x)$$
where $P_0 , P_1$ are constant matrices and $det (P_0) \neq 0$ [Theorems 1 and 2, page 381, \citep{key19}]. 
This notion and algorithms developed (see, e.g. \citep{key26}), were generalized as well to linear functional matrix equations in \citep{key64},  a particular case of which is system \eqref{particular}. This gave rise to the package ISOLDE  \citep{key27}, written in the computer algebra system Maple and dedicated to their symbolic resolution. Moser's reduction criterion was also borrowed from the theory of differential systems to investigate efficient algorithmic resolution of the perturbed algebraic eigenvalue-eigenvector problem in \citep{key54}. This problem is another prominent example in the univariate case, for which $\delta$ is the zero map and $K=\mathbb{C}((\epsilon))$. Thereby, $\Delta$ is just a linear operator in the standard way and $A(\epsilon)$ is the widely studied perturbation of the constant matrix $A_0$ (see, e.g., \citep{key56}). 

However, despite their utility and efficiency in these univariate cases, Moser-based algorithms are not considered yet over bivariate fields. In particular, the second author of this article developed in  \citep{key41} a Moser-based algorithm for the differential systems associated to $\delta= x \frac{d}{dx}$ and $K= \mathbb{Q}(x)$, the field of rational functions in $x$. This algorithm is the one we generalize here to system \eqref{general}. Although Moser's motivation for introducing these rational numbers for system \eqref{particular} was to distinguish regular singular systems in the sense of Poincar\'e, we show hereby that they serve as well for the reduction of $h$ to its minimal integer value. 
 
This article is organized as follows: In Section \ref{parampreliminaries}, we give preliminaries. In Section \ref{nested}, we present a Moser-based method of recovery from \textit{turning points}.  In Section \ref{moser}, we introduce a Moser-based reduction algorithm to diminish $h$ to its minimal integer value. Our main results are Theorem \ref{mosernilpotent} and Proposition \ref{nestedprop}. In Subection \ref{compare}, we give an outline of a generalization of Levelt's rank reduction algorithm of \citep{key57} and illustrate by an example its comparison to the Moser-based one. Finally, we conclude in Section \ref{conclusion} and point out some prospects of further research. We remark however that as our interest is formal, reference to asymptotic behavior and/or convergence is dropped. One may consult in the latter direction, e.g., \citep{key6, key60}.

\section{Preliminaries}
\label{parampreliminaries}
Consider system \eqref{general}. Without loss of generality, we can assume that $A_k (x) \in {K}^{n \times n}_x$ and the leading coefficient matrix $A_0(x)$ is a nonzero element of $\mathcal{O}_x^{n \times n}$ otherwise $h$ and $p$  can be readjusted. We refer to $A_{00}:= A(0, 0)$ as the leading constant matrix.  The nonnegative integer $h$ will be referred to as the \textit{rank} of singularity in $\epsilon$ ($\epsilon$-rank, in short).  We assume that system \eqref{general} has at most one \textit{turning point} otherwise the region of study can be shrinked. Moreover, this turning point is placed at the origin, otherwise, a translation in the independent variable can be performed. We denote by $I_k$ and $O_{k,l}$ the identity and zero matrices respectively of prescribed dimensions. 

Let $T \in GL_n(K)$ then the change of the dependent variable $Y = T Z$ in \eqref{general} gives 
\begin{equation} \label{gauge} \epsilon  \frac{dZ}{dx} = \tilde{A}(x, \epsilon) Z= \epsilon^{- \tilde{h}} x^{- \tilde{p}} \sum_{k=0}^{\infty} A_k(x) {\epsilon}^k  Z. \end{equation}
where $\tilde{A}(x, \epsilon) \in K^{n \times n}$ and 
\begin{equation} \label{gaugerelation} T[A]:= \tilde{A}(x, \epsilon)= T^{-1} A(x, \epsilon) T - \epsilon \;T^{-1} \frac{d}{dx}{T} .\end{equation}
Systems \eqref{general} and \eqref{gauge} are called \textit{equivalent}. In fact, $T$ is a change of basis and $\tilde{A}(x, \epsilon)$  is the matrix of $T^{-1} \Delta T$. 
\begin{remark}
\label{noeffect1} Given system \eqref{general} and an \textit{equivalent} system \eqref{gauge}. 
\begin{itemize}
\item If $h>0$ and $T(x) \in GL_n(K_x)$ then  it follows from \eqref{gaugerelation}
%$x^{-\tilde{p}} (\tilde{A}_0(x) + \tilde{A}_1(x) \epsilon + \dots)  &=& T^{-1} x^{-\tilde{p}} (\tilde{A}_0(x) + \tilde{A}_1(x) \epsilon + \dots) T - x \epsilon^{h+1} T^{-1} \frac{d T}{dx}.$ So, we have
$$x^{-\tilde{p}} (\tilde{A}_0(x) + \tilde{A}_1(x) \epsilon) = T^{-1} x^{-p} (A_0(x) + A_1(x) \epsilon) T .$$
Hence, since we'll be interested solely  in $\tilde{A}_0(x)$ and/or $\tilde{A}_1(x)$, it suffices to investigate $T^{-1} A T$, the similarity term of $T[A]$.  In particular, if $T(x) \in GL_n(\mathcal{O}_x)$ then $p=\tilde{p}$ as well. 
\item If $T= diag (\epsilon^{\alpha_1}, \dots, \epsilon^{\alpha_n})$, where $\alpha_1, \dots, \alpha_n$ are integers, then $\delta{T}$ is a zero matrix. Hence, $T[A]= T^{-1} A T$. 
\end{itemize}
\end{remark} 
A classical tool in perturbation theory is the so-called splitting which separates off the existing distinct coalescence patterns. Whenever the leading constant matrix $A_{00}$ admits at least two distinct eigenvalues, there exists a coordinate transformation $T \in GL_n(\mathcal{O})$ which block-diagonalizes System \eqref{general} so that it can be decoupled into subsystems of lower dimensions whose leading constant matrices have a unique distinct eigenvalue each (see, e.g., Theorem $XII-4-1$ page 381, \citep{key71}).
\begin{remark} In (Theorem $2.3-1$, page $17$, \citep{key60}), we have a generalization of the splitting to the well-behaved case which refers to $A_0(x)$ being holomorphically similar to its holomorphic Jordan form. In particular, its eigenvalues don't coalesce in the region of study. However, we limit ourselves to the weaker version given above as we aim to give a general discussion which doesn't exclude \textit{turning points}.  
\end{remark}
We now consider one of these resulting subsystems and assume its leading constant matrix is in jordan normal  form with a unique eigenvalue $\gamma \in \mathbb{C}$. This can be always attained by a constant transformation.  Upon applying the so-called eigenvalue shifting, i.e. $$Y = exp (\epsilon^{-h-1} \int \gamma x^{-p} dx) Z, $$ it is easy to verify from \eqref{gaugerelation} that the resulting system has a nilpotent leading constant matrix. Hence, without loss of generality, we can assume that system \eqref{general} is such that $A_{00}$ is nilpotent. Clearly, it doesn't follow that $A_0(x)$ is nilpotent and we deviate here from the classical treatment of system \eqref{particular} as we may encounter \textit{turning points}. 

The subscripts are to be ommitted and $x, \epsilon$ dropped from the notation whenever ambiguity is not likely to arise, e.g. $A(x, \epsilon)$ , $A_0(x)$, $A_1(x)$ will be denoted by $A, A_0, A_1$ respectively. We set $r= rank(A_0(x))$. 
\section{Recovery from Turning Points}
\label{nested}
We arrive at this section with $A_0(x)$ of system \eqref{general} being  nonnilpotent in contrary to the leading constant matrix $A_{00}$. We show that by Moser-based reduction of $A_0(x)$ itself and ramification in $x$, that is a readjustment $x=t^s$ with $s$ a positive integer, we can modify radically the nilpotency of $A_{00}$, i.e. arrive at a system for which splitting can be applied.  

The motivation behind considering such a general form of system \eqref{general} rather than that for which $p \leq 0$ is to be justified in this section. In fact, system \eqref{general} will undergo a sequence of transformations which might introduce or elevate the order of the pole in $x$. This elevation is inevitable and is introduced identically in the classical treatment of such systems. However, the order of poles of $A_k(x), k=0, 1, 2, \dots$ grows at worst linearly with $k$ which maintains an asymptotic validity of the formal construction [page 60, \citep{key59}].

By Remark \eqref{noeffect1}, the discussion is restricted to the similarity term $T^{-1} A T$ of $T[A]$. Hence, $A_0(x)$ can be treated as a perturbation of $A_{00}$. 
\begin{proposition} 
\label{nestedprop}
Given system \eqref{general}
$$
\epsilon \frac{dY}{dx} = A(x, \epsilon) Y = \epsilon^{- h} x^{- p} \sum_{k=0}^{\infty} A_k(x) {\epsilon}^k  Y
$$
where $A_0(x) = A_{00} + A_{01} x + A_{02} x^2 + \dots$ such that $A_{00}$ is a nilpotent constant matrix. If $A_0(x)$ has a nonzero eigenvalue then there exist a positive integer $s$ and a $T \in GL_n(K_x)$ such that by setting $x=t^s$, $Y = T Z$, we have in \eqref{gauge}
$$\tilde{A}_0(t) := T^{-1} A_0(t) T= \tilde{A}_{00} + \tilde{A}_{01} t + \tilde{A}_{02} t^2 + \dots$$
where  $\tilde{A}_{00}$ has a nonzero constant eigenvalue. 
\end{proposition}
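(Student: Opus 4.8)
The plan is to read off the non-vanishing eigenvalue from the characteristic polynomial, clear its fractional valuation by a ramification, and then use a purely-similarity Moser reduction to expose that eigenvalue at the constant level. Write $\chi(\lambda,x)=\det(\lambda I-A_0(x))\in\mathcal{O}_x[\lambda]$. Since $A_{00}=A_0(0)$ is nilpotent we have $\chi(\lambda,0)=\lambda^n$, so every eigenvalue of $A_0(x)$ vanishes at $x=0$; the hypothesis furnishes at least one root $\lambda(x)\not\equiv 0$. A Newton--Puiseux analysis of $\chi$ then gives $\lambda(x)=c\,x^{q}+\cdots$ with $c\neq 0$ and $q\in\mathbb{Q}_{>0}$. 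Let $q_{\min}$ be the least positive $x$-valuation occurring among the roots of $\chi$ and let $b$ be the least common denominator of the finitely many such valuations.

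First I would ramify. Setting $x=t^{s}$ with $s=b$ turns each eigenvalue into a Puiseux series of integer $t$-valuation; in particular the distinguished eigenvalue becomes $\lambda(t^{s})=c\,t^{\nu}+\cdots$ with $\nu:=s\,q_{\min}\in\mathbb{Z}_{>0}$. I would then introduce the pole deliberately and consider
\[ B(t):=t^{-\nu}\,A_0(t^{s}), \]
a matrix over $K_t$ whose eigenvalues $t^{-\nu}\lambda_i(t^{s})$ all have non-negative $t$-valuation, exactly one of them (the distinguished one) having valuation $0$ and value $c\neq 0$ at $t=0$. The point of admitting $p>0$ in system \eqref{general}, flagged at the opening of this section, is precisely to make room for this factor $t^{-\nu}$.

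Next I would apply Moser-based reduction to $B$. Because $h>0$, Remark \ref{noeffect1} reduces the action of any $T\in GL_n(K_t)$ on the $\epsilon$-leading coefficient to the pure similarity $T^{-1}B\,T$, so this is exactly the similarity (``$\delta=0$'') version of Moser's reduction, the one already used for the perturbed eigenvalue problem. Since every exponent $t^{-\nu}\lambda_i(t^{s})$ has non-negative valuation, $B$ carries no negative slope and its true Poincar\'e rank should be $0$; hence, testing the analogue of Moser's criterion ($\theta(\lambda)$ vanishing identically) and applying the associated shearings $T=(P_0+P_1t)\,\mathrm{diag}(1,\dots,1,t,\dots,t)$ repeatedly, the invariant $m(\cdot)$ strictly decreases and after finitely many steps one reaches a holomorphic $\tilde B=T^{-1}B\,T\in\mathcal{O}_t^{n\times n}$. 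The created factor $t^{-\nu}$ is absorbed into the normalization of the $\epsilon$-leading coefficient, i.e. into $x^{-\tilde p}$ in the sense of Remark \ref{noeffect1}, so the normalized leading matrix is $\tilde A_0(t)=\tilde B(t)=\tilde A_{00}+\tilde A_{01}t+\cdots$. As conjugation preserves the characteristic polynomial, the roots of $\chi_{\tilde A_0}(\lambda,t)$ are again the $t^{-\nu}\lambda_i(t^{s})$, and evaluating at $t=0$ shows that $\tilde A_{00}$ has $c\neq 0$ among its eigenvalues, which is the required nonzero constant eigenvalue.

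I expect the main obstacle to be this reduction step: proving that Moser reduction really drives the pole order of $B$ down to $0$, and that the distinguished valuation-$0$ eigenvalue survives as a genuine nonzero eigenvalue of $\tilde A_{00}$ rather than being swallowed into a nilpotent Jordan block during the shearings. The clean way to see this is that a pole surviving at minimal rank would force some eigenvalue of negative valuation, contradicting the construction of $B$; making this rigorous amounts to checking that the reducibility criterion keeps applying exactly as long as a pole remains, together with the bookkeeping that the pole orders, and hence $\tilde p$, grow at worst linearly as promised earlier in the section. A secondary, more algorithmic difficulty is determining $s$ (equivalently $b$ and $\nu$) without computing the Puiseux expansions explicitly; here I would let the Moser test detect the correct ramification, taking $s$ to be the smallest integer for which the $\theta$-polynomial of the poled matrix ceases to vanish identically.
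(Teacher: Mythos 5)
Your proposal follows essentially the same route as the paper's proof: Newton--Puiseux expansion of the eigenvalues of $A_0(x)$, ramification $x=t^s$ chosen from the nonzero eigenvalue of minimal positive leading exponent, and a Moser-based similarity reduction (the pure-similarity case, as licensed by Remark \ref{noeffect1}) that brings this eigenvalue down to the constant term $\tilde{A}_{00}$. The ``main obstacle'' you flag --- that the reduction terminates with a holomorphic leading matrix in which the distinguished eigenvalue survives with nonzero value at $t=0$ --- is precisely the point the paper settles by citing the result of \citep{key54} (for a Moser-irreducible matrix, $n-\deg\theta$ eigenvalues have leading exponent in $[0,1[$ and the remaining $\deg\theta$ have leading exponent at least $1$), and your own observation that conjugation preserves the characteristic polynomial, whose coefficients stay in $\mathcal{O}_t$ because all eigenvalues of $t^{-\nu}A_0(t^s)$ have non-negative valuation, closes that gap equally well.
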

\begin{proof}
The eigenvalues of $A_0(x)$ admit a formal expansion in the fractional powers of $x$ in the neighborhood of $x=0$ (see, e.g. \citep{key56}).  We are interested only in their first nonzero terms. Let $\mu(x) = \sum_{j=0}^{\infty} \mu_j x^{j/s}$ be a nonzero eigenvalue of $A_0(x)$ whose leading exponent, i.e. smallest $j/s$ for which  $\mu_j \neq 0$ and $j, s$ are coprime, is minimal among the other nonzero eigenvalues. Then, without loss of generality, we can assume that $s=1$ otherwise we set $x = t^{s}$. Now, let $T \in GL_n(K_x)$ such that $\tilde{A}_0(x)$ is Moser-irreducible in $x$ i.e. 
$$\theta(\lambda) = {x}^{rank(\tilde{A}_{00})} \; det(\lambda I + \frac{\tilde{A}_{00}}{x} + \tilde{A}_{01} )|_{x=0}$$
doesn't vanish identically in $\lambda$. Then there are $n - deg \theta$ eigenvalues of $A_0(x)$ whose leading exponents lie in $[0,1[$, and deg $\theta$ eigenvalues for which the leading exponent is greater or equal 1 \citep 1in \citep{key54}]. But $\mu(x)$ is an eigenvalue of $\tilde{A}_0(x)$ with a minimal leading exponent and hence it is among those whose leading exponents lie in $[0,1[$. By our assumption, $s=1$ and hence the leading exponent of $\mu(x)$ is zero and $\mu_0 \neq 0$. Since $\mu_0$ is an eigenvalue of $\tilde{A}_{00}$, it follows that the latter is nonnilpotent.
\end{proof}
\begin{remark}
The eigenvalues of $A_0(x)$ are the roots of the algebraic scalar equation $f(x, \mu)= det (A(x) - \mu I_n) = 0$ and can be computed by Newton-Puiseux algorithm. The linear transformation $T \in GL_n(K_x)$ can be computed efficiently via ISOLDE \citep{key27}.
\end{remark}
We illustrate our approach by an Example from [page 88, \citep{key59}]. We remark however that the technique proposed in the former, adapted from \citep{key70} and \citep{key62}, debutes with the reduction of $A(x, \epsilon)$ to its Arnold-Wasow  form as mentioned above and then constructing the so-called characteristic polygon. This particular example was given initially in that for simplicity and hence, the transformations computed by our algorithm, which doesn't require reduction to this form, do not deviate hereby from those in the former. 
\begin{example}
Let $\epsilon  \frac{dY}{dx} = \epsilon^{-1}  \begin{bmatrix}  0 & 1& 0 \\ 0 & 0 & 1 \\  \epsilon & 0 & x \end{bmatrix} Y $. Clearly, we have a truning point at $x=0$ and
$$A_0(x)= \begin{bmatrix}  0 & 1& 0 \\ 0 & 0 & 1 \\0 & 0 & x \end{bmatrix} .$$ $A_{00}$ is nilpotent in Jordan form while the eigenvalues of  $A_0(x)$ are $0, 0, x$ whence $s=1$. A simple calculation shows that $A_0(x)$ is Moser-reducible in $x$ ($\theta(\lambda) \equiv 0$). Let $T= diag(1,x,x^2)$ then upon setting $Y=TZ$ we get
\begin{eqnarray*} \epsilon \frac{dZ}{dx} = \epsilon^{-1}  x \{ \begin{bmatrix}  0 & 1& 0 \\ 0 & 0 & 1 \\ 0& 0 & 1\end{bmatrix}  +  \begin{bmatrix}  0 & 0& 0 \\ 0 & 0 & 0 \\ 1& 0 & 0\end{bmatrix} x^{-3} \epsilon + \\ \begin{bmatrix}  0 & 0& 0 \\ 0 & -1 & 0 \\ 0& 0 & -2 \end{bmatrix} x^{-2} {\epsilon}^{2} \} Z .\end{eqnarray*}
The constant leading coefficient matrix of the resulting matrix is no longer  nilpotent. Hence the system can be decoupled into two subsystems upon setting $Z= T W$ where $$T=\begin{bmatrix}  1 & 0& 1 \\ 0 & 1 & 1 \\ 0& 0 & 1\end{bmatrix} + \begin{bmatrix}  -1& -1&0 \\ -1 & -1& -2 \\ -1& -1 & 0\end{bmatrix} x^{-3} \epsilon + O(x^{-6}\epsilon^{2}). $$  The resulting equivalent system then consists of the two uncoupled lower dimension systems where $W= {[W_1, W_2]}^T$:
\begin{eqnarray*} \epsilon \frac{dW_1}{dx} =  \epsilon^{-1} x \{ \begin{bmatrix}  0 & 1\\ 0 & 0  \end{bmatrix}  +  \begin{bmatrix}  -1 & 0 \\ -1 & 0 \end{bmatrix} x^{-3} \epsilon + \\ \begin{bmatrix}  1 & -1 \\ 1 & -1 + x^4  \end{bmatrix} x^{-6} {\epsilon}^{2} &+& O(x^{-9} \epsilon^3)\}  W_1 . \end{eqnarray*} 
$$\epsilon  \frac{dW_2}{dx} =  \epsilon^{-1} x \{ 1+ x^{-3} \epsilon + (1+ 2x^4) x^{-6} {\epsilon}^{2} + O(x^{-9} \epsilon^3)\} W_2 .  $$
For the former subsystem, $A_0(x)$ and $A_{00}$ are now simultaneously nilpotent. As for the latter,  the exponential part is $\frac{1}{2} \epsilon^{-2} x^2 (1+ O(x^{-3} \epsilon))$ which exhibits that the eigenvalue $x$ of the leading coefficient matrix of the system we started with is recovered as expected. 
\end{example}
\section{$\epsilon$-Rank Reduction}
\label{moser}
Following Section \ref{nested}, we can now assume without loss of generality that $A_0(x)$ and $A_{00}$ are simultaneously nilpotent. The system is recovered from its \textit{turning points} and no further reduction can be attained via Splitting. In analogy to modern techniques for system \eqref{particular}, we investigate $\epsilon$-rank reduction of system \eqref{general}.
\subsection{$\epsilon$-Reduction Criteria}
\label{regularity}
Following \eqref{invariant1} and \eqref{invariant2}, we define respectively the $\epsilon$-Moser rank and $\epsilon$-Moser Invariant of system \eqref{general} as the rational numbers:
\begin{eqnarray*} m_{\epsilon}(A) &= &\; \text{max} \;  (0, h + \frac{r}{n}) \quad 
\text{and} \\ \mu_{\epsilon} (A) &=& \; \text{min} \; \{ m_{\epsilon} (T[A]) | T \in GL_n(K) \; \}. \end{eqnarray*} 
\begin{definition}
System \eqref{general} (the matrix $A$ respectively) is called $\epsilon$-reducible in the sense of Moser ($\epsilon$-reducible, in short)  if $m_{\epsilon}(A) > \mu_{\epsilon} (A)$, otherwise it is said to be $\epsilon$-irreducible.
\end{definition}
\begin{remark}
This definition is not to be mixed neither with the usual sense of reduced system in the literature, i.e. the system \eqref{particular} obtained from system \eqref{general} ($\epsilon =0$); nor with the usual sense of Moser-reduced systems as for system \eqref{particular}, i.e. the systems whose $p$ is minimal.
\end{remark}
In particular, if $m_{\epsilon}(A) \leq1$ then $h=0$ and no further reduction is to be sought via this approach.
\subsection{$\epsilon$-Reduction Algorithm}
\label{reductionalgorithm}
We follow the algorithmic description of \citep{key41}. The main result of this section is the following theorem which is to be proved after giving its useful building blocks. 
\begin{theorem}
\label{mosernilpotent}
Given system \eqref{general}
$$
\epsilon \frac{dY}{dx} = A(x, \epsilon) Y = \epsilon^{- h} x^{- p} \sum_{k=0}^{\infty} A_k(x) {\epsilon}^k  Y
$$
such that $rank(A_0(x))=r$ and $m_{\epsilon}(A)>1$.  A necessary and sufficient condition for the system to be $\epsilon$-reducible (in the sense of Moser), i.e. the existence of a $T(x, \epsilon) \in Gl_n(K)$ such that $r (\tilde{A}_0(x)) < r$, is that the polynomial 
$$\theta(\lambda) := {\epsilon}^{r} \; det(\lambda I + \frac{A_0}{\epsilon} + A_1 )|{\epsilon=0}$$ 
vanishes identically in $\lambda$. \\
Moreover, $T(x, \epsilon)$ can always be chosen to be a product of unimodular transformations in $GL_n(\mathcal{O}_x)$ and polynomial transformations of the form $diag (\epsilon^{\alpha_1}, \dots, \epsilon^{\alpha_n})$ where $\alpha_1, \dots, \alpha_n$ are nonnegative integers. 
\end{theorem}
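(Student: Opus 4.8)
The plan is to adapt Moser's original argument for system \eqref{particular} to the $\epsilon$-setting, exploiting the structural analogy that replaces the derivation-singularity in $x$ by the parameter-singularity in $\epsilon$. First I would observe, via Remark \ref{noeffect1}, that since $h>0$ the term $\epsilon\, T^{-1}\frac{d}{dx}T$ is negligible relative to the leading $\epsilon^{-h}$ behaviour, so the $\epsilon$-Moser rank is governed entirely by the similarity action $T^{-1}AT$ on the truncated pencil $A_0 + A_1 \epsilon$. Thus the quantity $m_\epsilon(A)$ behaves exactly like $m(A)$ with $x$ replaced by $\epsilon$, and the polynomial $\theta(\lambda)$ is the natural $\epsilon$-analogue of Moser's polynomial. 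The claim $m_\epsilon(A)>1$ forces $h\ge 1$ together with $r\ge 1$ (since $m_\epsilon(A)=h+r/n$), which is the regime in which reduction is both meaningful and possible.

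For the necessity direction I would argue by contraposition: suppose $r(\tilde A_0)<r$ for some $T\in GL_n(K)$; then I would show that the value $\det(\lambda I + A_0/\epsilon + A_1)$, suitably normalized by $\epsilon^{r}$, is a similarity invariant up to the lower-order gauge correction, so that a genuine drop in $r$ can only occur when $\theta(\lambda)$ is identically zero. The cleaner route is sufficiency: assuming $\theta(\lambda)\equiv 0$, I would construct the reducing transformation explicitly as the $\epsilon$-analogue of Moser's $T=(P_0+P_1\epsilon)\,\mathrm{diag}(1,\dots,1,\epsilon,\dots,\epsilon)$. The vanishing of $\theta$ encodes a rank deficiency in the combined matrix $\bigl[\begin{smallmatrix} A_0 & \;\\ \end{smallmatrix}\bigr]$ relating $\mathrm{im}(A_0)$ and $\ker(A_0)$ through $A_1$; I would use a constant unimodular $P_0\in GL_n(\mathcal O_x)$ to bring $A_0$ into a normalized (e.g. column/row-echelon) form exposing its kernel and image, then use the $\epsilon$-scaling $S=\mathrm{diag}(\epsilon^{\alpha_i})$ with $\alpha_i\in\{0,1\}$ to lower the order of the $\epsilon$-pole in precisely the block where $\theta\equiv 0$ guarantees the requisite cancellation, and finally absorb the residual first-order term via $P_1\epsilon$. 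By the second bullet of Remark \ref{noeffect1}, the diagonal $\epsilon$-scaling contributes no derivative term, which keeps the computation purely multiplicative.

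The main obstacle I anticipate is verifying that, after the $\epsilon$-scaling $S$, the transformed leading matrix $\tilde A_0$ is still holomorphic in $\epsilon$ (i.e.\ no new negative powers of $\epsilon$ persist in the leading block) while simultaneously having strictly smaller rank. This is exactly the place where the hypothesis $\theta(\lambda)\equiv 0$ must be used quantitatively rather than qualitatively: it is the algebraic certificate that the entries which would otherwise produce a pole of order $h$ in the scaled block cancel, permitting readjustment of $h$ downward and a corresponding rank drop. I would track the $\epsilon$-valuations block by block, showing that the off-diagonal blocks introduced by $S$ have nonnegative valuation precisely because $\theta\equiv 0$ kills the obstructing determinantal coefficient.

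Finally, to establish the last sentence of the statement, I would note that each factor in the constructed $T$ is either a constant unimodular matrix (hence in $GL_n(\mathcal O_x)$, absorbable into the $P_0,P_1$ data) or a diagonal $\epsilon$-power scaling with nonnegative exponents, and that the reduction can be iterated: each pass either strictly decreases $r(\tilde A_0)$ or, when $r$ cannot drop further at the current $h$, permits decrementing $h$ after re-expanding the series, so the process terminates after finitely many steps with $T(x,\epsilon)$ exhibited as the required finite product. The essential subtlety here is bookkeeping the pole-order growth in $x$ flagged in Section \ref{nested} (at worst linear in $k$), which I would record but which does not affect the $\epsilon$-rank argument since the gauge term remains subordinate for $h>0$.
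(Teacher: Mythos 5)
Your necessity argument is essentially the paper's: $\epsilon^{r}\det(\lambda I + A/\epsilon)$ is preserved under the gauge action up to the subordinate derivative term, and a drop $\mathrm{rank}(\tilde A_0)<r$ leaves a strictly positive power of $\epsilon$ after normalization, so $\theta(\lambda)\equiv 0$. The sufficiency half, however, has a genuine gap, and it sits exactly where you flag ``the main obstacle.'' First, a \emph{constant} $P_0$ cannot in general expose $\ker A_0(x)$ and $\mathrm{im}\,A_0(x)$: unlike Moser's univariate situation, $A_0$ here is $x$-dependent and its rank structure may degenerate at $x=0$ (already $A_0=\mathrm{diag}(x,0)$ has rank $1$ while $A_{00}=0$). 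The normalization must be carried out by a genuinely $x$-dependent unimodular matrix over $\mathcal{O}_x$, obtained in the paper from the Smith normal form over the PID $\mathcal{O}_x$ (Lemma \ref{gauss1}), equivalently Gaussian elimination with pivots of minimal $x$-valuation (Remark \ref{gauss2}). This is precisely the point at which the bivariate problem refuses to be a cosmetic $x\mapsto\epsilon$ relabeling of Moser's theorem.

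Second, you never convert the hypothesis $\theta\equiv 0$ into the concrete null-space data that makes the shearing work; ``the entries which would otherwise produce a pole cancel'' is asserted, not derived. The paper's route is to form the auxiliary pencil $G_{\lambda}(A)$ of \eqref{glambdaform} from the blocks of $A_0$ and $A_1$, prove $\det G_{\lambda}(A)\equiv 0$ if and only if $\theta\equiv 0$ (Lemma \ref{glambda}), and then iteratively extract left null vectors of $G_{\lambda=0}$ (again with minimal-valuation pivoting over $\mathcal{O}_x$) to build a unimodular $Q(x)$ and an integer $\rho$ realizing conditions \eqref{particularconditionb}--\eqref{particularconditiona} (Proposition \ref{gauss3}); only then does the three-block shearing $S=\mathrm{diag}(\epsilon I_r,\, I_{n-r-\rho},\, \epsilon I_{\rho})$ of Proposition \ref{shearing} provably give $\mathrm{rank}(\tilde A_0)<r$. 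The case $\rho>0$, where several row-reduction passes are needed before the rank condition holds and the shearing must scale a \emph{third} block by $\epsilon$, is exactly what your single shear $\mathrm{diag}(1,\dots,1,\epsilon,\dots,\epsilon)$ with a ``$P_1\epsilon$ absorption'' misses; without it the transformed matrix can fail to be $\epsilon$-holomorphic or fail to lose rank. Supplying these two ingredients would turn your outline into the paper's proof.
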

\begin{lemma}
\label{gauss1}
There exists a unimodular matrix $U(x)$ in $GL_n(\mathcal{O}_x)$ such that the leading coefficient matrix $\tilde{A}_0(x)$ of $U[A]$ have the following form
\begin{equation} \label{gaussform} \tilde{ A_0}  = \begin{bmatrix} \tilde{A}_0^{11}  & O \\ \tilde{A}_0^{21} & O \end{bmatrix} \end{equation} 
where  $\tilde{A}_0^{11} $ is a square matrix of dimension $r$ and $\begin{bmatrix} \tilde{A}_0^{11} \\ \tilde{A}_0^{21} \end{bmatrix}$ is a $n \times r$ matrix of full column rank $r$. 
\end{lemma}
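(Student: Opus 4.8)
The plan is to prove this by an explicit constructive argument based on column operations over the ring $\mathcal{O}_x = \mathbb{C}[[x]]$. The statement asserts that the leading coefficient matrix $A_0(x)$, which has rank $r$ (as a matrix over the field $K_x$, equivalently its generic rank), can be brought by a single unimodular $U(x) \in GL_n(\mathcal{O}_x)$ into the block-column form \eqref{gaussform} in which the last $n-r$ columns are identically zero and the remaining $n \times r$ block has full column rank $r$. By Remark \ref{noeffect1}, since $U(x) \in GL_n(\mathcal{O}_x)$ acts on the $\epsilon$-rank-$h$ system only through the similarity term $U^{-1} A_0 U$ at leading order, it suffices to produce a \emph{right} multiplication $A_0(x) \mapsto A_0(x) U(x)$ (column operations) that zeroes out the trailing columns; the corresponding conjugation then inherits this block structure up to a harmless similarity.

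First I would regard $A_0(x)$ as a matrix with entries in $\mathcal{O}_x$ and recall that $\mathcal{O}_x$ is a discrete valuation ring (in fact a principal ideal domain) with uniformizer $x$. The key structural fact I would invoke is that a matrix over such a ring admits a reduction of its columns: one can perform a sequence of elementary column operations, each realized by right multiplication by a unimodular matrix in $GL_n(\mathcal{O}_x)$, to clear linearly dependent columns. Concretely, the $n - r$ columns of $A_0(x)$ that are $\mathcal{O}_x$-linear combinations of a chosen maximal independent set can be eliminated, placing $r$ independent columns in the leftmost block and zeros to the right. The cleanest route is to argue that the column module generated by $A_0(x)$ over the field of fractions $K_x$ has dimension $r$; I would select $r$ columns forming a $K_x$-basis and express the others in terms of them, then clear denominators and organize the combination so that the transforming matrix has its determinant a unit in $\mathcal{O}_x$ (i.e. nonvanishing at $x=0$), which is exactly the unimodularity requirement.

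The second step is to guarantee that the resulting leftmost $n \times r$ block has \emph{full column rank} $r$ — not merely generic rank $r$, but that no hidden power of $x$ has been introduced to degenerate it at the origin. Here I would be careful to choose the column operations so that $U(x)$ is genuinely invertible over $\mathcal{O}_x$ rather than only over $K_x$; the subtlety is that naive Gaussian elimination over a field would permit division by quantities vanishing at $x=0$, destroying unimodularity. The device to handle this is to pick pivots that are units whenever available, and otherwise to reorder and use the valuation structure — in effect performing a Smith-like or Hermite-like column reduction over the PID $\mathcal{O}_x$, which exists precisely because $\mathcal{O}_x$ is a PID. Since we only need to expose the zero columns and preserve the column rank of the surviving block (not diagonalize fully), a Hermite column form suffices and is achievable by unimodular column operations alone.

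I expect the main obstacle to be exactly this unimodularity-versus-field-rank tension: ensuring that the column-clearing transformation stays in $GL_n(\mathcal{O}_x)$ and that the retained block $\begin{bmatrix} \tilde{A}_0^{11} \\ \tilde{A}_0^{21} \end{bmatrix}$ keeps column rank $r$ over $\mathcal{O}_x$. Once the Hermite/Smith column reduction over the discrete valuation ring $\mathcal{O}_x$ is invoked, the rest is bookkeeping: relabelling blocks so that $\tilde{A}_0^{11}$ is the square $r \times r$ top block and verifying that full column rank of the $n \times r$ left block is equivalent to the independence of the $r$ chosen columns. I would close the proof by remarking that the construction is effective — it is realized by finitely many elementary unimodular column operations — which is what the algorithmic framework of \citep{key41} requires and what makes this lemma usable as a building block for Theorem \ref{mosernilpotent}.
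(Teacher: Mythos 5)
Your proposal is correct and takes essentially the same route as the paper: both exploit that $\mathcal{O}_x$ is a principal ideal domain to build a unimodular column reduction (the paper takes $U$ to be the right factor $R$ of the Smith normal form of $A_0$, whose last $n-r$ columns form a basis of $\ker A_0$), and both note that the conjugation $U^{-1}A_0U$ inherits the zero columns and the column rank of the surviving block. Your closing observation about choosing pivots of minimal valuation to preserve unimodularity is precisely the paper's Remark \ref{gauss2}.
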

\begin{proof}
By Remark \ref{noeffect1}, $\tilde{A}_0 = U^{-1} A_0 U$ hence it suffices to search a similarity transformation $U(x)$. Since $\mathcal{O}_x$ is a Principal Ideal Domain (the ideals of $\mathcal{O}_x$ are of the form $x^k \mathcal{O}$), it is well known that one can construct unimodular transformations $Q(x), R(x)$ lying in $GL_n(\mathcal{O}_x)$ such that the matrix $Q A_0 R$ has the Smith Normal Form
$$Q\;A_0\;R = diag (x^{\beta_1}, \dots, x^{\beta_r}, 0, \dots, 0) )$$
where $det R(0) \ne 0$, $det Q(0) \ne 0$ , and $\beta_1, \dots, \beta_r$ in $\mathbb{Z}$ with $0 \leq \beta_1 \leq \beta_2 \leq \dots \leq \beta_r$.\\
It follows that we can compute a unimodular matrix $R(x)$ in  $GL_n(\mathcal{O}_x)$ so that its $n-r$ last columns form a basis of $ker(A_0)$.   Hence, we set $U(x)=R(x)$. 
\end{proof}
\begin{remark}
\label{gauss2}
In practice, $U(x)$  can be obtained by performing gaussian elimination on the columns of $A_0(x)$ taking as  pivots  the elements of minimum order (valuation) in $x$ as already suggested in \citep{key5}.
\end{remark}
Hence, we can suppose without loss of generality that  $A_0(x)$ consists of $r$ independent columns and $(n-r)$ zero columns. We partition $A_1(x)$ conformally with $A_0$, i.e. $ A_1= \begin{bmatrix} A_1^{11}& A_1^{12} \\ A_1^{21} & A_1^{22}  \end{bmatrix}$, and consider  
\begin{equation} \label{glambdaform} G_{\lambda}(A)= \begin{bmatrix} A_0^{11} & A_1^{12} \\ A_0^{21} & A_1^{22} + \lambda  I_{n-r}\end{bmatrix} .\end{equation}
We illustrate our progress with the following simple example. 
\begin{example}
\label{firststep}
Given $\epsilon \frac{d Y}{dx} = A(x, \epsilon) Y $ where $$A= \epsilon^{-2}  \begin{bmatrix} \epsilon  & -x^3 \epsilon & (1+x) \epsilon & 0 \\ x^2 & x \epsilon & 0 & -2x \epsilon \\ -x & 0 & 0 & 2 \epsilon \\ 0 & 2 & 0 & \epsilon^2\end{bmatrix} . $$ Clearly, $A_0(x)$ is nilpotent of rank $2$ and \begin{equation} \label{firststepg} G_{\lambda}(A)= \begin{bmatrix}  0 & 0 & x+1  & 0 \\ x^2 & 0 & 0 & -2 x\\ -x & 0 &  \lambda  & 2 \\ 0 & 2 & 0 &  \lambda  \end{bmatrix} .\end{equation}
\end{example}
This consideration of $G_{\lambda}(A)$ gives an $\epsilon$-reduction criteria equivalent to $\theta(\lambda)$ as demonstrated in  the following lemma.
\begin{lemma}
\label{glambda}
$Det(G_{\lambda}(A) \equiv 0$ vanishes identically in $\lambda$ if and only if $\; \theta(\lambda)$ does.   
\end{lemma}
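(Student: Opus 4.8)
The plan is to prove the stronger statement that $\theta(\lambda)$ and $\det(G_\lambda(A))$ are \emph{equal} as polynomials in $\lambda$ with coefficients in $K_x$; the claimed equivalence of their identical vanishing is then immediate. The key is to exploit the normal form secured by Lemma \ref{gauss1}: after the preceding reduction we may assume that the last $n-r$ columns of $A_0$ vanish, so that, partitioning conformally, $A_0 = \begin{bmatrix} A_0^{11} & O \\ A_0^{21} & O \end{bmatrix}$ with $\begin{bmatrix} A_0^{11} \\ A_0^{21}\end{bmatrix}$ of full column rank $r$, while $A_1 = \begin{bmatrix} A_1^{11} & A_1^{12} \\ A_1^{21} & A_1^{22}\end{bmatrix}$.

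First I would write out the matrix defining $\theta$,
\[
\lambda I + \frac{A_0}{\epsilon} + A_1 = \begin{bmatrix} \frac{A_0^{11}}{\epsilon} + \lambda I_r + A_1^{11} & A_1^{12} \\ \frac{A_0^{21}}{\epsilon} + A_1^{21} & \lambda I_{n-r} + A_1^{22}\end{bmatrix},
\]
and observe that $A_0$ contributes the factor $\epsilon^{-1}$ only through its first $r$ columns. Hence the prefactor $\epsilon^{r}$ can be absorbed, by multilinearity of the determinant, into a scaling of exactly those $r$ columns by $\epsilon$, giving
\[
\epsilon^{r}\det\!\left(\lambda I + \tfrac{A_0}{\epsilon} + A_1\right) = \det\begin{bmatrix} A_0^{11} + \epsilon(\lambda I_r + A_1^{11}) & A_1^{12} \\ A_0^{21} + \epsilon A_1^{21} & \lambda I_{n-r} + A_1^{22}\end{bmatrix},
\]
which is manifestly a polynomial in $\epsilon$. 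Setting $\epsilon=0$ leaves precisely $\det(G_\lambda(A))$, so $\theta(\lambda) = \det(G_\lambda(A))$ and the two vanish identically together.

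The only point requiring care is the well-definedness of the operation ``multiply by $\epsilon^{r}$, then evaluate at $\epsilon=0$''. I would justify it by expanding $\det(A_0 + \epsilon(\lambda I + A_1))$ by columns: a term selecting a set $S$ of columns from $\lambda I + A_1$ and the complementary columns from $A_0$ carries the factor $\epsilon^{|S|}$ and vanishes unless the columns drawn from $A_0$ are independent, forcing $|S^{c}|\le r$, i.e. $|S|\ge n-r$. Thus $\det(A_0 + \epsilon(\lambda I + A_1))$ has $\epsilon$-valuation exactly $n-r$, $\epsilon^{r}\det(\lambda I + A_0/\epsilon + A_1)$ is regular at $\epsilon=0$, and its value there is the coefficient of $\epsilon^{n-r}$. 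In the Gauss form this coefficient is carried by the \emph{single} surviving minor, obtained by taking columns $1,\dots,r$ from $A_0$ and columns $r+1,\dots,n$ from $\lambda I + A_1$, which is exactly $\det(G_\lambda(A))$. I expect the main (and only mild) obstacle to be this bookkeeping: confirming that no spurious minors survive and that the column scaling is applied to precisely the $r$ columns along which $A_0$ is supported, so that the power of $\epsilon$ matches the prefactor exactly.
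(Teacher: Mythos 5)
Your proposal is correct and takes essentially the same route as the paper: scaling the first $r$ columns by $\epsilon$ via multilinearity is precisely the paper's factorization $\epsilon^{-1}A = N D^{-1}$ with $D=\mathrm{diag}(\epsilon I_r, I_{n-r})$, and both arguments establish the stronger identity $\theta(\lambda)=\det(G_{\lambda}(A))$ before evaluating at $\epsilon=0$. (Minor nit: the $\epsilon$-valuation of $\det(A_0+\epsilon(\lambda I + A_1))$ is at least, not exactly, $n-r$, since the surviving minor may itself vanish; your conclusion only needs the inequality.)
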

\begin{proof}
Let $D(\epsilon)= diag(\epsilon I_{r}, I_{n-r})$ then we can write $\epsilon^{-1} A= N D^{-1}$ where $N:= N(x, \epsilon) \in K^{n \times n}$ has no poles in $\epsilon$. Set $D_0 = D(0)$ and $N_0= N(x,0)$. Then we have 
\begin{eqnarray*} det(G_{\lambda}(A) & =&  det (N_0 + \lambda D_0)  =  det (N + \lambda D)|_{\epsilon=0} \\
& = & (det (\frac{A}{\epsilon} + \lambda I_n) \; det (D))|_{\epsilon=0}  \\ &=&  (det (\frac{A_0}{\epsilon} + A_1 + \lambda I_n) \; \epsilon^{r} )|_{\epsilon=0} = \theta (\lambda).  \end{eqnarray*} 
\end{proof}
\begin{proposition}
\label{gauss3}
If $m_{\epsilon}(A) >1$ and $det(G_{\lambda}(A)) \equiv 0$ then there exists a unimodular matrix $Q(x)$ in $GL_n(\mathcal{O}_x)$ with $det \; Q(x) = \pm 1$ such that the matrix $G_{\lambda}(Q[A])$ has the form 
\begin{equation} \label{particularform3} G_{\lambda}(Q[A]) = \begin{bmatrix} A_0^{11} & U_1 & U_2 \\ V_1 & W_1 + \lambda I_{n- r -\rho} & W_2 \\ M_1 & M_2 & W_3 + \lambda I_\rho \end{bmatrix} ,\end{equation}
where $0 \leq \rho \leq n-r,\; W_1,\; W_3$ are square matrices of order $(n-r-\rho)$ and $\rho$ respectively , and
\begin{equation} \label{particularconditionb} rank(\begin{bmatrix} A_0^{11} & U_1 \\ M_1 & M_2 \end{bmatrix})= rank(\begin{bmatrix} A_0^{11} & U_1 \end{bmatrix}),\end{equation}
\begin{equation}  \label{particularconditiona}rank(\begin{bmatrix} A_0^{11} & U_1 \end{bmatrix}) < r . \end{equation}
\end{proposition}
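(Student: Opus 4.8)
The plan is to work directly on the pencil $G_{\lambda}(A)$, exploiting that by Lemma \ref{glambda} the hypothesis is equivalent to $\det(G_{\lambda}(A))\equiv 0$, i.e.\ that the pencil is singular over $K_x(\lambda)$. Since $A_0$ is already column-reduced, I would restrict to block-lower-triangular similarities $Q=\bigl[\begin{smallmatrix} I_r & 0\\ Q_{21} & Q_{22}\end{smallmatrix}\bigr]$ with $Q_{22}\in GL_{n-r}(\mathcal{O}_x)$. Such $Q$ fix the block $A_0^{11}$, preserve the column-reduced shape of $A_0$, and conjugate the lower-right block so that $\lambda I_{n-r}$ is untouched (and may therefore be re-split as $\mathrm{diag}(\lambda I_{n-r-\rho},\lambda I_\rho)$); under $Q$ the top-right block undergoes the column operation $A_1^{12}\mapsto A_1^{12}Q_{22}$, while $Q_{21}$ adds $K_x$-combinations of the first $r$ rows to the last $n-r$, which is exactly what is needed to clear $[M_1\ M_2]$. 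Building $Q$ from elementary operations, pivoting on entries of minimal valuation as in Remark \ref{gauss2}, keeps $Q\in GL_n(\mathcal{O}_x)$ with $\det Q=\pm 1$.

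First I would note that the coefficient of $\lambda^{\,n-r}$ in $\det(G_{\lambda}(A))$ equals $\det(A_0^{11})$, so the hypothesis already forces $\mathrm{rank}(A_0^{11})<r$. Choosing $Q_{22}$ is then the same as choosing the subspace $\mathcal{S}\subseteq K_x^{\,n-r}$ spanned by its first $n-r-\rho$ columns (so $\rho=n-r-\dim\mathcal{S}$), the dual partition of $Q_{22}^{-1}$ fixing the row split. A direct computation shows that conditions \eqref{particularconditiona}--\eqref{particularconditionb} can be met — the role of $Q_{21}$ being to clear $[M_1\ M_2]$ — if and only if $\mathcal{S}$ satisfies (a) $\dim\bigl(\mathrm{col}(A_0^{11})+A_1^{12}\mathcal{S}\bigr)<r$, and (b) $\mathcal{S}$ is invariant under the linear relation $u\mapsto A_0^{21}s+A_1^{22}u$ taken over all $s$ with $A_0^{11}s+A_1^{12}u=0$; in particular $A_0^{21}(\ker A_0^{11})\subseteq\mathcal{S}$. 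This reduces the whole statement to producing one subspace $\mathcal{S}$ meeting (a) and (b).

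To build $\mathcal{S}$ I would use a nonzero polynomial right null vector $[\,s(\lambda)^{T}\ u(\lambda)^{T}]^{T}$ of the singular pencil, which exists precisely because $\det(G_{\lambda}(A))\equiv 0$. Writing $s=\sum_j s_j\lambda^j$, $u=\sum_j u_j\lambda^j$, the two block equations give $A_0^{11}s_j+A_1^{12}u_j=0$ together with the recurrence $u_{j-1}=-(A_0^{21}s_j+A_1^{22}u_j)$ and leading term $s_k\in\ker A_0^{11}$. Taking $\mathcal{S}$ to be spanned by the $u_j$ over a full polynomial basis of the null space makes (a) immediate, since $A_1^{12}u_j=-A_0^{11}s_j\in\mathrm{col}(A_0^{11})$ yields $A_1^{12}\mathcal{S}\subseteq\mathrm{col}(A_0^{11})$ and hence $\mathrm{col}(A_0^{11})+A_1^{12}\mathcal{S}=\mathrm{col}(A_0^{11})$ of dimension $<r$; and the recurrence gives $A_0^{21}s_j+A_1^{22}u_j=-u_{j-1}\in\mathcal{S}$, which is the invariance (b) along each null chain. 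The hard part will be to upgrade this to the full invariance (b) — notably the inclusion $A_0^{21}(\ker A_0^{11})\subseteq\mathcal{S}$ for the entire kernel and not merely for the leading coefficients $s_k$ — while still keeping (a). I expect this to be the crux: it amounts to identifying $\mathcal{S}$ with the smallest subspace containing $A_0^{21}(\ker A_0^{11})$ that is invariant under the map in (b), and showing that singularity of the pencil forces that subspace to continue to satisfy $A_1^{12}\mathcal{S}\subseteq\mathrm{col}(A_0^{11})$. Once $\mathcal{S}$ is fixed, I would realize it by an $\mathcal{O}_x$-basis of minimal valuation, obtain $Q\in GL_n(\mathcal{O}_x)$ with $\det Q=\pm1$, and read off $\rho$, the blocks $U_1,U_2$, and $M_1,M_2$ from $G_{\lambda}(Q[A])$.
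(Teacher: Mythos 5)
Your strategy has a genuine gap, and it sits exactly where the proposition is designed to be careful. You aim to use the block $Q_{21}$ to \emph{annihilate} $[M_1\ M_2]$, i.e.\ to prove nullity of these blocks rather than the rank condition \eqref{particularconditionb}. That stronger conclusion is the one from the univariate algorithm of \citep{key41}, and it is precisely what cannot be guaranteed here: solving for $Q_{21}$ so that the columns of $A_0^{21}-Q_{21}A_0^{11}$ land in your subspace $\mathcal{S}$ requires inverting $A_0^{11}$ on its column space, which over $\mathcal{O}_x$ introduces denominators in $x$; you even write that $Q_{21}$ adds \emph{$K_x$-combinations} of the first $r$ rows, which already takes $Q$ out of $GL_n(\mathcal{O}_x)$ and forfeits $\det Q=\pm 1$. (The paper flags this explicitly just before the proof: the nullity of $M_1,M_2$ is replaced by the weaker condition \eqref{particularconditionb} because ``otherwise the unimodularity of $Q(x)$ cannot be guaranteed''.) On top of this, you leave the crux of your own construction open: you obtain the invariance (b) of $\mathcal{S}$ only along individual null chains and acknowledge that upgrading it to the full invariance --- in particular $A_0^{21}(\ker A_0^{11})\subseteq\mathcal{S}$ for the whole kernel --- while preserving (a) is unproven. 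So the proposal establishes neither the stated rank condition nor the stronger nullity it targets.

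The intended argument is far more elementary and avoids both issues. Since $\det G_{\lambda}(A)\equiv 0$, the matrix $G_{\lambda=0}(A)$ is singular; letting $E$ and $F$ be the spans of its first $r$ and last $n-r$ rows, $\dim(E+F)<n$ forces either $\dim E<r$ (then \eqref{particularconditiona} holds with $\rho=0$) or the existence of a left null vector $W_1$ with entries in $\mathcal{O}_x$ and a nonzero entry among the last $n-r$ coordinates (a permutation brings a pivot of minimal valuation to position $n$). Replacing the $n$-th row of $I_n$ by the truncation $V_1$ of $W_1$ to those coordinates gives a unit lower-triangular $R_1\in GL_n(\mathcal{O}_x)$, and the new last row of $G_{\lambda}$ equals $V_1G_{\lambda=0}=-(W_1-V_1)G_{\lambda=0}$, an $\mathcal{O}_x$-combination of the first $r$ rows --- so \eqref{particularconditionb} holds \emph{automatically}, with no clearing step and with $Q_{21}=0$ throughout. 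Iterating on the singular submatrix $G_0^{(\rho)}$ of Remark \ref{gq} either reaches \eqref{particularconditiona} or exhausts $\rho=n-r$, in which case $\det A_0^{11}=0$ and \eqref{particularconditiona} holds anyway; $Q$ is then a product of permutations and unit lower-triangular matrices, hence unimodular with determinant $\pm 1$. If you want to salvage your right-null-vector, invariant-subspace picture, you must weaken your target from $M_1=M_2=0$ to the row-span containment \eqref{particularconditionb} and keep every operation integral over $\mathcal{O}_x$.
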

Our procedure is that of \citep{key41} except for the properties of $M_1$, $M_2$, and $W_3$. The nullity of $M_1$ and $M_2$ in the former is replaced by the weaker condition \eqref{particularconditionb} here. Otherwise, the unimodularity of  $Q(x)$ cannot be guaranteed. Moreover, this refinement avoids unnecessary computations. The bridge between both descriptions is established in Remark \ref{gq} before proceeding to the proof of the Proposition. 
\begin{remark}
\label{gq}
Suppose that $G_{\lambda}(Q[A])$ has the form \eqref{particularform3} and there exists a transformation $T(x) \in GL_n(K_x)$ such that $G_{\lambda}(T[Q[A]])$ has the form
$$G_{\lambda}(T[Q[A]]) = \begin{bmatrix} A_0^{11} & U_1 & U_2 \\ V_1 & W_1 + \lambda I_{n- r -\rho} & W_2 \\ O & O & \tilde{W}_3 + \lambda I_\rho \end{bmatrix} ,$$
where $0 \leq \rho \leq n-r,\; W_1,\; \tilde{W}_3$ are square matrices of order $(n-r-\rho)$ and $\rho$ respectively, and $\tilde{W}_3$ is upper triangular with zero diagonal. Then, $$ det\; G_{\lambda}(T[Q[A]]) = {\lambda}^\rho \; det \;  \begin{bmatrix} A_0^{11} & U_1 \\ V_1 & W_1 + \lambda I_{n- r -\rho} \end{bmatrix}.$$
If $det(G_{\lambda}(A) \equiv 0)$ then we have as well
 $det(G_{\lambda}(Q[A])= det(G_{\lambda}(T[Q[A]]) \equiv 0 . $ Hence,
\begin{equation}
\label{singular}
det \;  \begin{bmatrix} A_0^{11} & U_1  \\ V_1 & W_1 + \lambda I_{n- r -\rho} \end{bmatrix} = 0 .
\end{equation}
We shall denote by $G_0^{(\rho)}$ the matrix $G_0^{(\rho)} = \begin{bmatrix} A_0^{11} & U_1  \\ V_1 & W_1 \end{bmatrix}. $
\end{remark}
\begin{proof} (Proposition \ref{gauss3})  Since $det(G_{\lambda}(A))  \equiv 0$ then in particular, $det(G_{\lambda=0}(A)) = 0$. The matrix $G_{\lambda=0}(A)$ is thus singular. Let $E$ (respectively $F$) be the vector space spanned by the first $r$ (resp. last $n-r$) rows of $G_{\lambda=0}(A)$. We have
$$dim (E+F) = rank(G_{\lambda=0}(A)) < n .$$
If  $dim (E) < r$ then set $\rho =0$. Otherwise, since  $$dim(E + F)  = dim(E) + dim(F) - dim(E \cap F) < n , $$   it follows that 
%$dim(F) - dim(E \cap F)< n-r$. We see
 either $dim(F) < n-r$ or $dim(E \cap F) > 0$. In both cases, there exists at least a  row vector $W_1(x)=(w_1, \dots, w_n)$ with entries in $\mathbb{C}((x))$ in the left null space of $G_{\lambda =0}(A)$, such that $w_i \neq 0$ for some $r+1 \leq i \leq n$. We can assume without loss of generality that $W_1(x)$ has its entries in $\mathcal{O}_x$. Indeed, this assumption can be guaranteed by a construction as in Lemma \ref{gauss1}. Let  $V_1(x)$ be the nonzero row vector whose entries $v_i$ are such that $v_i = w_{i}$ for  $r+1 \leq i \leq n$ and zero elsewhere. It can be attained that $v_n=1$ upon replacing $A$ by $P[A]$ where $P$ is a permutation matrix which exchanges the rows of $A$ so that the pivot is taken to have the minimum order (valuation) in $x$. 
 Let $R_1(x)$ be the matrix whose $n^{th}$ row is formed by $V_1(x)$, the other rows being those of the identity matrix $I_n$. Note that $R_1(x)$ is a lower triangular matrix in $GL_n(\mathcal{O}_x)$ with $det\; R_1 =1$. Now put $A^{(1)}= R_1^{-1}[A]$.  Thus, $G_{\lambda}(A^{(1)})$ has the form \eqref{particularform3} with \eqref{particularconditionb} and $\rho=1$. 
\\ By \eqref{singular}, the matrix $G_0^{(1)}(x)$ is singular so if the condition \eqref{particularconditiona} does not occur then one can find, by same argument as above, a vector $V_2(x)$ in $\mathcal{O}_x$ whose $(n-1)^{th}$ component is $1$ such that $V_2 \; G_0^{(1)} =0$.  If $R_2(x)$ denotes the matrix whose ${n-1}^{th}$ row is formed by $(V_2(x), 0)$, the others being those of the identity matrix $I_n$, and $A^{(2)} = R_2^{-1}[A^{(1)}]$ then the matrix $G_{\lambda}(A^{(2)})$ has the form \eqref{particularform3} with \eqref{particularconditionb} and $\rho=2$. 

A finite number of applications of this process yields an equivalent matrix $A^{(\rho)}$ with \eqref{particularconditionb} for which either \eqref{particularconditiona} occurs or $\rho=n-r$. But in the latter case one has, again by Remark \ref{gq} and \eqref{singular}, that $det\; A_0^{11} =0$, and so \eqref{particularconditiona} occurs. The matrix $Q(x)$ we are seeking is obtained as a product of permutation matrices or lower triangular matrices whose determinant is $1$, and so its determinant is $\pm 1$. 
\end{proof}

We consider again Example $\ref{firststep}$.
\begin{example*}
A simple calculation shows that $det(G_{\lambda}(A)) \equiv 0$ hence $A$ is $\epsilon$-reducible.  From \eqref{firststepg}, for $\lambda =0$, we have the singular matrix
$$G_{\lambda=0}(A) =   \begin{bmatrix}  0 & 0 & x+1  & 0 \\ x^2 & 0 & 0 & -2 x\\ -x & 0 &  0  & 2 \\ 0 & 2 & 0 & 0 \end{bmatrix}.\; \text{Let} \quad Q= \begin{bmatrix}  1 & 0 & 0  & 0 \\ 0 & 1 & 0 & 0\\ 0 & 0 & 0 & 1 \\ 0 & 0 & 1 &  0\end{bmatrix},$$ 
$$ \text{hence} \quad Q[A]=  \epsilon^{-2}  \begin{bmatrix} \epsilon  & -x^3 \epsilon & 0 & (1+x) \epsilon \\ x^2 & x \epsilon & -2x \epsilon & 0 \\ 0 & 2 & \epsilon^2 & 0 \\ -x & 0 & 2 \epsilon & 0 \end{bmatrix} . $$ 
Moreover,  $G_{\lambda}(Q[A])$ has the form \eqref{particularform3} with $\rho= 1$ and $r=2$. In fact,  $G_{\lambda}(Q[A]) = \begin{bmatrix} 0  & 0 & 0 & (1+x)  \\ x^2 & 0 & 0 & 0 \\ 0 & 2 & \lambda & 0 \\ -x & 0 & 2 &  \lambda \end{bmatrix} . $
\end{example*}
\begin{proposition}
\label{shearing}
If $m_{\epsilon}(A) >1$ and $G_{\lambda}(A)$ has the form \eqref{particularform3} with conditions \eqref{particularconditionb} and \eqref{particularconditiona} satisfied, then system \eqref{general} (resp. $A$) is $\epsilon$-reducible and reduction can be carried out with the so-called shearing  transformations $S = diag(\epsilon I_r , I_{n-r-\rho}, \epsilon I_\rho)$ whenever $\rho \neq 0$ and $S=diag(\epsilon I_r , I_{n-r})$ otherwise.
\end{proposition}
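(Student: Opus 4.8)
The plan is to use the shearing $S$ as an explicit witness that $\mu_\epsilon(A)<m_\epsilon(A)$: I would show that conjugating by $S$ leaves the $\epsilon$-rank unchanged while forcing the rank of the new leading coefficient matrix below $r$. Since $S=\mathrm{diag}(\epsilon I_r,I_{n-r-\rho},\epsilon I_\rho)$ is a constant-in-$x$ diagonal matrix whose entries are integer powers of $\epsilon$, the second bullet of Remark \ref{noeffect1} gives $\delta S=0$ and hence $S[A]=S^{-1}AS$. So the whole computation reduces to tracking how conjugation by $S$ shifts the powers of $\epsilon$ block by block.

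First I would write $A=\epsilon^{-h}x^{-p}\sum_{k\ge 0}A_k(x)\epsilon^k$ and partition each $A_k$ conformally with the triple partition $(r,\,n-r-\rho,\,\rho)$ underlying \eqref{particularform3}. With shear exponents $(s_1,s_2,s_3)=(1,0,1)$, the $(i,j)$ block of $S^{-1}AS$ is the $(i,j)$ block of $A$ multiplied by $\epsilon^{s_j-s_i}$; thus the only blocks whose pole is deepened (multiplied by $\epsilon^{-1}$) are the $(1,2)$ and $(3,2)$ blocks. The key observation is that, after Lemma \ref{gauss1}, the last $n-r$ columns of $A_0$ vanish, so the $A_0$-parts $A_0^{[12]}$ and $A_0^{[32]}$ of exactly these two dangerous blocks are zero. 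Hence no term in $\epsilon^{-h-1}$ survives, the $\epsilon$-rank is preserved, $\tilde h=h$, and reading off the coefficient of $\epsilon^{-h}$ block by block (using $s_i-s_j\ge 0$ to decide which $A_k$ contributes, and again that every block sitting in the vanishing columns of $A_0$ drops out) I expect to obtain, in the notation of \eqref{particularform3},
\begin{equation*}
\tilde A_0=\begin{bmatrix} A_0^{11} & U_1 & O \\ O & O & O \\ M_1 & M_2 & O \end{bmatrix},
\end{equation*}
where the middle block row is zero because the $(2,1)$ and $(2,3)$ blocks are raised by $\epsilon^{+1}$ while the $(2,2)$ block lies in the vanishing columns of $A_0$.

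Finally I would compute $r(\tilde A_0)$. Deleting the zero block row and the zero block column gives $r(\tilde A_0)=\mathrm{rank}\begin{bmatrix} A_0^{11} & U_1 \\ M_1 & M_2\end{bmatrix}$, which by hypothesis \eqref{particularconditionb} equals $\mathrm{rank}\begin{bmatrix} A_0^{11} & U_1\end{bmatrix}$, and this is $<r$ by \eqref{particularconditiona}. Since $\tilde h=h$ and the standing hypothesis $m_\epsilon(A)>1$ forces $h\ge 1$, both $\epsilon$-Moser ranks are governed by their positive argument, so $m_\epsilon(S[A])=h+r(\tilde A_0)/n<h+r/n=m_\epsilon(A)$; hence $\mu_\epsilon(A)\le m_\epsilon(S[A])<m_\epsilon(A)$ and $A$ is $\epsilon$-reducible. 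The case $\rho=0$ is the same computation with the third group absent and \eqref{particularconditionb} vacuous. I expect the only delicate point — and the single place the argument can fail — to be the bookkeeping of $\epsilon$-powers that secures $\tilde h=h$: one must check that every block pushed down to $\epsilon^{-h-1}$ has a vanishing $A_0$-part (which is precisely the column structure produced by Lemma \ref{gauss1}), and that conditions \eqref{particularconditionb}--\eqref{particularconditiona} are applied to exactly the surviving submatrix $\begin{bmatrix} A_0^{11} & U_1 \\ M_1 & M_2\end{bmatrix}$ rather than to $\tilde A_0$ itself.
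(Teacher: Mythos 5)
Your proposal is correct and follows essentially the same route as the paper: apply the second bullet of Remark \ref{noeffect1} to reduce $S[A]$ to $S^{-1}AS$, track the block-wise shifts $\epsilon^{s_j-s_i}$ to see that the only deepened blocks are the $(1,2)$ and $(3,2)$ ones whose $A_0$-parts vanish by the column structure from Lemma \ref{gauss1}, read off the same $\tilde A_0$, and conclude via \eqref{particularconditionb} and \eqref{particularconditiona} that its rank is below $r$. You merely make explicit the bookkeeping that the paper compresses into ``it is easy to verify.''
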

\begin{proof}
We partition $A(x, \epsilon)$ as follows
$$A= \begin{bmatrix} A^{11}  & A^{12} & A^{13} \\ A^{21}  & A^{22} & A^{23} \\  A^{31}  & A^{32} & A^{33} \end{bmatrix}$$
where $A^{11} , A^{22} , A^{33} $ are of dimensions $r, n-r-\rho,$ and  $\rho$ respectively. 
It is easy to verify then  that $$ \tilde{A}(x, \epsilon)= S^{-1} A S - S^{-1} \delta S= S^{-1} A S= \begin{bmatrix} A^{11}  & \epsilon^{-1} A^{12} & A^{13} \\   \epsilon A^{21}  & A^{22} &  \epsilon A^{23} \\  A^{31}  &  \epsilon^{-1} A^{32} & A^{33} \end{bmatrix}.$$
Hence, the new leading coefficient matrix is 
$$\tilde{A}_0 = \begin{bmatrix} A_0^{11} & U_1 & O \\ O& O &O \\ M_1 & M_2 & O \end{bmatrix}$$  and $rank (\tilde{A}_0) = rank( A_0^{11} \quad  U_1) < r.$ \end{proof}

\begin{example*}
Let $S=diag(\epsilon , \epsilon, 1, \epsilon)$ then 
$$S[Q[A]] = \epsilon^{-2} \begin{bmatrix} \epsilon & -x^3 \epsilon  & 0 & (1+x) \epsilon \\ x^2 &x \epsilon & -2x & 0 \\ 0 & 2  \epsilon & \epsilon^2 & 0 \\ -x & 0  & 2 & 0\end{bmatrix} . $$ It is clear that the leading term has rank $1< r$. 
\end{example*}
Now, we are ready to give the proof of  Theorem \ref{mosernilpotent}.

\begin{proof} (Theorem \ref{mosernilpotent}) We first prove that the condition is necessary. In fact, suppose that such a $T$ exists then we have
$det (\lambda I + \frac{A}{\epsilon}) = det (\lambda I + \frac{\tilde{A}}{\epsilon}).$
It then suffices to see that
$$ \epsilon^r \; det (\lambda I + \frac{A}{\epsilon})|_{\epsilon=0} =  \epsilon^{r} \; det (\lambda I + \frac{\tilde{A}}{\epsilon})|_{\epsilon=0} =  \epsilon^{r-m} \; P(x, \epsilon)|_{\epsilon=0}$$
where $m \leq r(\tilde{A}_0) < r \;\text{and}\; P(x, \epsilon) \in K^{n \times n}$ has no poles in $\epsilon$. Moreover, it is evident that $\theta (\lambda)=0$ since   $$ \epsilon^r \; det (\lambda I + \frac{A}{\epsilon})|_{\epsilon=0} = \epsilon^r \; det (\lambda I + \frac{A_0}{\epsilon} + A_1 + \sum_{k=2}^{\infty} A_k {\epsilon}^{k-1})|_{\epsilon=0}.$$
As for the sufficiency of the theorem, by Lemma \ref{gauss1}, we can assume that $A_0(x)$ is in the form \eqref{gaussform}. Then,  $G_{\lambda}(A)$ is constructed as in \eqref{glambdaform}. Since $\theta(\lambda)=0$ and $m_{\epsilon}(A) >1$, it follows from Lemma \ref{glambda} that $det(G_{\lambda}(A))=0$ and $A$ is $\epsilon$-reducible. Hence, the matrix $S[Q[A]]$ where $S, Q$ are as in Propositions \ref{gauss3} and \ref{shearing} respectively, has the desired property. 
\end{proof}
\begin{algorithm}
\caption{Moser-based $\epsilon$-Rank Reduction of System \eqref{general}}
\label{algorithmparam}
\textbf{Input:} $A(x,\epsilon)$ of system \eqref{general}  \\
\textbf{Output:} $T(x, \epsilon) \in GL_n(K)$ and an \textit{equivalent} system given by $T[A]$ which is $\epsilon$-irreducible in the sense of Moser.\\
\begin{algorithmic}
\State $T  \gets  I_{n}$; \\
$h  \gets  \epsilon$-rank of $A$;\\
$U(x)  \gets $ unimodular transformation  computed in Lemma \ref{gauss1} and Remark \ref{gauss2} such that $U^{-1} A_0 U$ has form \eqref{gaussform};\\
$T  \gets  T U$;
$A \gets  U^{-1} A U - \epsilon U^{-1} \frac{d U}{dx}$;
\While {$Det (G_{\lambda}(A))=0$ and $h>0$} \do \\
\State $Q(x), \rho  \gets $ Proposition \ref{gauss3} ;
\State $S(\epsilon)  \gets $ Proposition \ref{shearing};
\State $P  \gets  Q S$;
\State $T  \gets  T P$;
\State $A \gets  P^{-1} A P - \epsilon P^{-1} \frac{d P}{dx}$;
\State $h  \gets  \epsilon$-rank of $A$;
\State $U(x)  \gets $  Lemma \ref{gauss1} and Remark \ref{gauss2};
\State $T  \gets  T U$;
\State $A \gets  U^{-1} A U - \epsilon U^{-1} \frac{d U}{dx}$;
\EndWhile . \\
\Return{(T, A)}.
\end{algorithmic}
\end{algorithm}
\begin{remark} The $\epsilon$-reducibility of $A$ implies that the rank of the leading coefficient matrix can be reduced without necessarily reducing the $\epsilon$-rank of the system. If the $\epsilon$-reduction criteria holds for a sufficient number of equivalent systems then a repetitive application of such a transformation results in an equivalent system whose leading coefficient matrix has a null rank, hence $h$ is reduced by one (e.g. Example \ref{algoexm}). At this point, the discussion restarts from the nature of the eigenvalues of the leading constant matrix. 
\end{remark}
\begin{example}
\label{algoexm}
Given $\epsilon \frac{d Y}{dx} = A(x, \epsilon) Y $ where $$A= \epsilon^{-3}  \begin{bmatrix} 2 x \epsilon^3  & 3 x^2 \epsilon^4 & 2x \epsilon^2 & (2x+1) \epsilon^5 \\ 0 &  \epsilon^4 & 0 & 0\\ 0 & 0 & \epsilon^2 & 0 \\ -2 x & 0 & 0 & 0 \end{bmatrix} . $$
The gauge transformaiton $Y=T Z$ computed by our algorithm results in an equivalent $\epsilon$-irreducible system whose $\epsilon$-rank is diminished by two as illustrated below:
 $$T= \begin{bmatrix} 0& \epsilon^3 & 0 & 0 \\ 0 &  0 & \epsilon & 0\\ \epsilon^3  & 0 &0 & 0 \\ 0 & 0 & 0 & 1 \end{bmatrix}  \; \text{and} \;  T[A]= \epsilon^{-1}  \begin{bmatrix} 1  & 0& 0& 0 \\ 2x &  2 x \epsilon & 3 x^2 & 2x+1\\ 0 & 0 & \epsilon^2 & 0 \\ 0 & -2 x \epsilon & 0 & 0\end{bmatrix} . $$
\end{example}
\subsection{Example of Comparison with Levelt's Approach}
\label{compare}
While Moser defined two rational numbers in \citep{key19}, Levelt investigated the existence of stationary sequences of free lattices in \citep{key57}. 
Since Moser-based and Levelt's algorithms serve the same utility, i.e. rank reduction of system \eqref{particular}, it is natural to question their comparison.  The cost analysis in the univariate case of the Moser-based algorithm described in \citep{key41} and that of Levelt's \citep{key57} was given in [page 108, \citep{key73}]. Both algorithms turn out to have an identical cost which suggests a further experimental study so that they can be well-compared. The latter was generalized in \citep{key5} to a certain class of Pfaffian systems over bivariate fields. Consequently, following arguments similar to those of \citep{key5} whenever the leading coefficient matrix is nilpotent and to Section \ref{nested} of this article whenever it's not, Levelt's algorithm seems adaptable to system \eqref{general} as well. Such an attempt would give rise to Algorithm \ref{algorithmlevelt}.

\begin{algorithm}
\caption{Generalization of Levelt's Rank Reduction to System \eqref{general}}
\label{algorithmlevelt}
\textbf{Input:} $A(x,\epsilon)$ of system \eqref{general};  \\
\textbf{Output:} $T(x, \epsilon) \in GL_n(K)$ computed via Levelt's approach and an \textit{equivalent} system given by $T[A]$ which is $\epsilon$-irreducible in the sense of Moser.\\

\begin{algorithmic}
\State $T\gets I_{n}$\\
$i \gets 0$;\\
$h  \gets  \epsilon$-rank of $A$;
\While {$i < n-1$ and $h>0$} \do
\State $r \gets rank (A_0)$;
\State $U(x) \gets$ unimodular transformation of Lemma \ref{gauss1} and Remark \ref{gauss2} such that $U^{-1} A_0 U$ has the form \eqref{gaussform};
\State $S(x) \gets diag(\epsilon I_r, I_{n-r})$;
\State $P \gets U S$;
\State $T \gets T P$;
\State $A \gets  P^{-1} A P - \epsilon P^{-1} \frac{\partial P}{\partial x}$;
\State $\tilde{h} \gets  \epsilon$-rank of $A$;
\If {$\tilde{h} < h$} \State {$i \gets 0$;} \Else \State $i \gets i+1$; \EndIf
\State $h \gets  \tilde{h}$;
\EndWhile . \\
\Return{(T, A)}.
\end{algorithmic}
\end{algorithm}

It is clear that Algorithm \ref{algorithmlevelt} coincides with Algorithm \ref{algorithmparam} for $\rho=0$. For $\rho > 0$, which frequently occurs for matrices of dimension greater than $3$, we ran simple examples of systems \eqref{general}. Despite the identical cost of both algorithms, these examples exhibited that Algorithm \ref{algorithmlevelt} complicates dramatically the coefficient matrices of the system under reduction. One factor in this complication stems from the weak termination criterion of this algorithm. However, even upon adjoining Moser's termination criterion (given by $\theta(\lambda)$) to this algorithm, as suggested in [Secion 5 of \citep{key5}], the result remains less satisfying than that of Algorithm \ref{algorithmparam}. We exhibit here a selected example over $\mathcal{O}=\mathbb{C}[[x, \epsilon]]$. Additional examples along with our implementation are available online at \citep{key100}.
\begin{example}
Let $\epsilon  \frac{dY}{dx} = \epsilon^{-h} x^{-p} A(x,\epsilon) Y $ where $$A= \begin{bmatrix} 0&0&0&\epsilon x (\epsilon x+3)&\epsilon^3 (x+9)&\epsilon x^2 \\ x&0&0&0&9 \epsilon^2 x^2&0\\ x^2&-1&0&\epsilon x (\epsilon x^6+1)&0&0\\x^3+x&-x&0&\epsilon x^2&0&0\\ 0&0&x&-3 \epsilon&0&0\\ x^3-1&5&0&0&0&0\end{bmatrix}.$$
It is easily verified that $A$ is Moser-reducible in $\epsilon$. Furthermore, we have $m_{\epsilon}(A)= h+ \frac{3}{6}$ and $\mu_{\epsilon}(A) = h + \frac{2}{6}$. Hence, it suffices to run only one reduction step, for which the rank of the leading coefficient matrix is dropped by one. We give in the following the transformation $T(x,\epsilon)$ and $T[A]$ as computed by our Moser-based algorithm, the generalization of Levelt's algorithm with Moser's reducibility criterion, and Levelt's algorithm respectively. For the latter, $T$ and $T[A]$ are to be found at \citep{key100} due to the lack of space here. However we illustrate the dramatic growth of their coefficients by listing one entry of each.  
\begin{itemize}
\item $T= \begin{bmatrix} \epsilon&0&0&0&0&0\\ 0&\epsilon&0&0&0&0 \\ 0&0&\epsilon&0&0&0 \\ 0&0&0&0&0&\epsilon \\ 0&0&0&1&0&0 \\ 0&0&0&0&\epsilon&0 \end{bmatrix} \quad \text{and} $
$$T[A]= \begin{bmatrix} 0&0&0&\epsilon^2 (x+9)&x^2 \epsilon&\epsilon x (\epsilon x+3) \\ x&0&0&9 x^2 \epsilon&0&0 \\ x^2&-1&0&0&0&\epsilon x (\epsilon x^6+1) \\ 0&0&\epsilon x&0&0&-3 \epsilon^2 \\ x^3-1&5&0&0&0&0 \\ x (x^2+1)&-x&0&0&0&x^2 \epsilon \end{bmatrix}$$
\item $T= \begin{bmatrix} \epsilon^3&0&0&0&0&-9 x \epsilon^2 \\ 0&0&-1/3 x^2 \epsilon^2&\epsilon^2&0&0 \\ 0&0&0&0&\epsilon&0 \\ 0&\epsilon^2&-1/3 \epsilon x&0&0&0 \\ 0&0&0&0&0&1 \\ 0&0&\epsilon&0&0&0 \end{bmatrix} \quad \text{and} $ \\ $ T[A]= [\tilde{a}_{ij}]$ including entries having $2$-digit coefficients and degree $8$ in $x$, e.g.,$$\begin{cases}
\tilde{a}_{12}=-x (27 \epsilon^2-\epsilon x-3)\\
\tilde{a}_{13}= \frac{1}{3} x^2 (-x+27 \epsilon)\\
\tilde{a}_{41}= \epsilon x^4-\epsilon x+3\\
\tilde{a}_{52}=\epsilon^2 x (\epsilon x^6+1)\\
\tilde{a}_{53}=-(1/3) \epsilon^2 x^8\end{cases}$$
\item $T=  [t_{ij}]$, $T[A]=[\tilde{a}_{ij}]$ have entries with $4$-digit and $10$-digit coefficients. Degrees in $x$ surpass $8$, e.g.
$$\begin{cases}
t_{23}= 1458\epsilon^2 x^5 {(x^3+3x^2+2)}^2/(243x^{11}+1944x^{10}\\+3627x^9+1026x^8+1944x^7-8820x^6+
432x^5\\-4860x^4-72x^3-652x^2-72x-324)
\end{cases}$$
$$
 \begin{cases} \tilde{a}_{13}=(39366\epsilon x^{25}+1220346\epsilon x^{24}+14565420\epsilon x^{23}+ \\ 83731482 \epsilon x^{22}-1003833 x^{23}+236309724 \epsilon x^{21}\\
-25095825 x^{22}  +708588 \epsilon^2 x^{19}
+299723976 \epsilon x^{20}\\-237858120 x^{21}+17714700 \epsilon^2x^{18} +191306610 \epsilon x^{19}\\-1080181170 x^{20}+143134776 \epsilon^2x^{17}+
140023890 \epsilon x^{18}\\-2462664465 x^{19}+462261816 \epsilon^2 x^{16}-236799612 \epsilon x^{17} \\-3116776563 x^{18}+553879620 \epsilon^2x^{15} +
98626896 \epsilon x^{16}\\-4040830962 x^{17}+384763284 \epsilon^2x^{14} -334491552 \epsilon x^{15}\\-5062097592 x^{16}+750333456 \epsilon^2 x^{13}+
149450184 \epsilon x^{14}\\-3027609900 x^{15}+681679152 \epsilon^2 x^{12} -469371996 \epsilon x^{13} \\-4012634700 x^{14}+460735776 \epsilon^2 x^{11}
-88155972 \epsilon x^{12}\\-1297170936 x^{13}+804260016 \epsilon^2x^{10} -332366796 \epsilon x^{11}\\-786840174 x^{12}+245153952 \epsilon^2 x^9
-90553302 \epsilon x^{10}\\-404146368 x^{11}+383522040 \epsilon^2x^8 -73322280 \epsilon x^9\\+451532556 x^{10} +111422304 \epsilon^2 x^7-
9084492 \epsilon x^8\\-104014116 x^9+74305512 \epsilon^2x^6-14703120 \epsilon x^7\\ +222024672 x^8+17635968 \epsilon^2 x^5-
7039224 \epsilon x^6\\-17230320 x^7-23184 \epsilon^2x^4-8030664 \epsilon x^5+88635312 x^6\\-414720 \epsilon^2 x^3-
3070548 \epsilon x^4-294176 x^5\\-1819584 \epsilon^2x^2+32403312x^4+419904 \epsilon^2 x+1692576 x^3\\+
944784 \epsilon^2+3895776 x^2+524880 x+944784)/(243 x^{11}\\+1944x^{10}+3627 x^9+1026 x^8+1944 x^7-8820 x^6\\+432 x^5-4860 x^4-72 x^3-652 x^2-72 x-324)^2 ;
\end{cases}$$
\end{itemize}
\end{example}
\section{Conclusion and Further Investigations}
\label{conclusion}
We proposed a Moser-based algorithm which recovers a singularly-perturbed linear differential system from its \textit{turning points} and reduces its $\epsilon$-rank to its minimal integer value.  A complementary step to attain the full formal reduction would be to find the ramification in the parameter which renders the general case to the case discussed here in a recursive process. One approach is that based on analysis by a Newton polygon and applied to system \eqref{particular} in \citep{key24} and the scalar case of system \eqref{general} in \citep{key77}. The sufficient number of coefficient matrices in computations is still to be investigated. 

In the usual treatment of \textit{turning points}, a \textit{restraining index} $\chi$ is defined and updated at every reduction step to observe the growth of the order of poles in the $A_k(x)$'s (see, e.g. \citep{key60,key59} and references therein). This \textit{restraining} index plays a role in the asymptotic interpretation of the formal solutions. As demonstrated in Section \ref{nested}, the transformations we apply to recover from turning points are polynomial (shearings). Hence, the growth of the poles order is bounded and can be expected apriori. The insight this gives into the \textit{restraining index} is to be investigated. 

Examples comparing this algorithm with a generalization of Levelt's favors the former. However, it suggests that Levelt's algorithm be generalized to system \eqref{general} and that a bit complexity study comparing both algorithms be held alongside. Furthermore, it motivates  generalization of the Moser-based algorithms over differential bivariate fields, e.g. Pfaffian systems in two variables \citep{key101}. 

An additional field of investigation is the two-parameter algebraic eigenvalue problem as a generalization of the one parameter case investigated via Moser-based approach in \citep{key54}. In fact, the main role in the reduction process is reserved to the similarity term of $T[A]$. Hence, the discussion of such problems is not expected to deviate from the discussion presented here in the differential case.

\bibliography{mybib}
\end{document}